\newtheorem{theorem}{Theorem}
\newtheorem{corollary}[theorem]{Corollary}
\newtheorem{lemma}[theorem]{Lemma}
\newtheorem{proposition}[theorem]{Proposition}
\newenvironment{proof}[1][Proof]{\noindent\textbf{#1.} }{\ \rule{0.5em}{0.5em}}
\begin{document}

\title{Complex conference matrices, complex Hadamard matrices and complex equiangular tight frames}
\author{Boumediene Et-Taoui\\Universit\'{e} de Haute Alsace -- LMIA\\4 rue des fr\`{e}res Lumi\`{e}re 68093 Mulhouse Cedex}
\maketitle

\begin{abstract}
In this article we construct new, previously unknown parametric families of complex conference
matrices and of complex Hadamard matrices of square orders and related them to
complex equiangular tight frames. It is shown that for any odd integer $k\geq 3$ such that $2k=p^{\alpha}+1$, $p$ prime, $\alpha$ non-negative integer, on the one hand there exists a $(2k,k)$ complex equiangular tight frame and for any $\beta\in\mathbb{N}^{\ast}$ there exists a $((2k)^{2^{\beta}},\frac{1}{2}(2k)^{2^{\beta-1}}((2k)^{2^{\beta-1}%
}\pm1))$ complex equiangular tight frame depending on one unit complex number, and on the other hand there exist a family of
$((4k)^{2^{\beta}},\frac{1}{2}(4k)^{2^{\beta-1}}((4k)^{2^{\beta-1}}\pm1))$ complex equiangular tight frames depending on two unit complex numbers.

{AMS Classification. }Primacy $42C15$, $52C17$; Secondary $05B20$.

\end{abstract}

B.Ettaoui@uha.fr \bigskip

\section{Introduction}

This article deals with the so-called notion of complex equiangular tight
frames. Important for coding and quantum information theories are real and
complex equiangular tight frames, for a survey we refer to \cite{HP}. In a
Hilbert space $\mathcal{H}$, a subset $F=\left\{  f_{i}\right\}  _{i\in
I}\subset\mathcal{H}$ is called a \textit{frame} for $\mathcal{H}$ provided
that there are two constants $C,D\ >0$ such that
\[
C\left\Vert x\right\Vert ^{2}\leq\sum\limits_{i\in I}\left\vert \left\langle
x,f_{i}\right\rangle \right\vert ^{2}\leq D\left\Vert x\right\Vert ^{2},
\]
holds for every $x\in\mathcal{H}$. If $C=D=1$ then the frame is called
\textit{normalized tight} or a \textit{Parseval frame}. Throughout this
article we use the term $(n,k)$ frame to refer to a Parseval frame of $n$
vectors in $\mathbb{C}^{k}$ equipped with the usual inner product. The ratio
$\frac{n}{k}$ is called the \textit{redundancy} of the $(n,k)$ frame. It is
well- known that any Parseval frame induces an isometric embedding of
$\mathbb{C}^{k}$ into $\mathbb{C}^{n}$ which maps $x\in\mathbb{C}^{k}$ to its
frame coefficients $(Vx)_{j}=\left\langle x,f_{j}\right\rangle $, called the
analysis operator of the frame. Because $V$ is linear, we may identify $V$
with an $n\times k$ matrix and the vectors $\left\{  f_{1},...,f_{n}\right\}
$ denote the columns of $V^{\ast}$, the Hermitean conjugate of $V$. It is
shown in \cite{HP} that a $(n,k)$ frame is determined up to a unitary
equivalence by its Gram matrix $VV^{\ast}$, which is a self-adjoint projection
of rank $k$. If in addition the frame is uniform and equiangular that is,
$\left\Vert f_{i}\right\Vert ^{2}$ and $\left\vert \left\langle f_{i}%
,f_{j}\right\rangle \right\vert $ are constants for all $1\leq i\leq n$ and
for all $i\neq j$, $1\leq i,j\leq n$, respectively, then follows
\[
VV^{\ast}=\frac{k}{n}I_{n}+\sqrt{\frac{k(n-k)}{n^{2}(n-1)}}Q,
\]
where $Q$ is a self-adjoint matrix with diagonal entries all $0$ and
off-diagonal entries all of modulus $1$, and $I_{n}$ is the identity matrix of
order $n$. The matrix $Q$ is called the \textit{Seidel matrix} or
\textit{signature matrix} associated with the $(n,k)$ frame. The existence of
an equiangular Parseval frame is known from \cite{HP} to be equivalent to the
existence of a Seidel matrix with two eigenvalues. Of course the
vectors of a $(n,k)$ frame generate equiangular lines in $\mathbb{C}^{k}$. However equiangular $n$-tuples are not characterized by
single matrices but by the classes of such matrices under the equivalence
relation generated by the following operations.
\begin{enumerate}
\item Operation.  multiplication by
a unimodular complex number $a$ of any row and by $\overline{a}$ of the
corresponding column.
\item Operation. Interchange of rows and simultaneously
of the corresponding columns.
\end{enumerate}
The purpose of this paper
is to investigate Seidel matrices of order $2k$ which have two distinct
eigenvalues with equal multiplicity $k$. This class of matrices is a
particular subclass of the class of \ the so-called complex conference
matrices. A complex $n\times n$ \textit{conference matrix} $C_{n}$ is a matrix
with $c_{ii}=0$ and $\left\vert c_{ij}\right\vert =1$, $i\neq j$ that
satisfies

\[
CC^{\ast}=(n-1)I_{n}. %
\]

Real conference matrices have been heavily studied in the literature in
connection with combinatorial designs in geometry, engineering, statistics,
and algebra.

The following necessary conditions are known : $n\equiv2$ $(\operatorname{mod}%
4)$ and $n-1=a^{2}+b^{2}$, $a$ and $b$ integers for symmetric matrices
(\cite{B}, \cite{P}, \cite{R}, \cite{LiS}), and $n=2$ or $n\equiv0$ $(\operatorname{mod}%
4)$ for skew symmetric matrices \cite{W}. However the only conference
matrices that have been constructed so far are symmetric matrices of order
$n=p^{\alpha}+1\equiv2$ $(\operatorname{mod}4)$, $p$ prime, $\alpha$
non-negative integer (\cite{P}) or $n=(q-1)^{2}+1$, where $q$ is the order of
a conference symmetric or skew symmetric matrix (\cite{GS}) or $n=(q+2)q^{2}%
+1,$ where $q=4t-1=p^{\alpha}$, $p$ prime and $q+3$ is the order of a
conference symmetric matrix (\cite{M}), or$\ n=5\cdot9^{2\alpha+1}+1$, $\alpha$ non-negative integer
(\cite{SZ}), and skew symmetric matrices of order $n=2^{s}\prod\limits_{i=1}%
^{r}(p_{i}^{\alpha_{i}}+1)$, $p_{i}^{\alpha_{i}}+1\equiv0(\operatorname{mod}%
4)$, $p_{i}$ primes, $s$, $r$ and $\alpha_{i}$ non-negative integers
(\cite{W}). In \cite{DGS} the authors show that essentially there are no other
real conference matrices. Precisely they prove that any real conference matrix
of order $n>2$ is equivalent, under multiplication of rows and columns by
$-1$, to a conference symmetric or to a skew symmetric matrix according as $n$
satisfies $n\equiv2$ $(\operatorname{mod}4)$ or $n\equiv0$
$(\operatorname{mod}4)$. In addition we observe that $n$ must be even. This is
not the case for complex conference matrices. They were used in \cite{D} to provide
parametrization of complex Sylvester inverse orthogonal matrices. It is quoted
in \cite{D}, and easy to show, that there is no complex conference matrix of
order $3$; however we can find such a matrix of order $5$ \cite{D}. Recently, the present author \cite{pr} constructed an infinite family of complex symmetric conference matrices of odd orders by use of finite fields and the Legendre symbol. These matrices solve the problem of finding the maximum number of pairwise isoclinic planes in Euclidean odd dimensional spaces. In this article, a new method to obtain a parametrization of complex conference matrices of even orders is given. Complex conference matrices are important because by construction the matrix
\[
H_{2n}=\left(
\begin{array}
[c]{cc}%
C_{n}+I_{n} & C_{n}^{\ast}-I_{n}\\
C_{n}-I_{n} & -C_{n}^{\ast}-I_{n}%
\end{array}
\right)  ,
\]

is a complex Hadamard matrix of order $2n$.

A matrix of order $n$ with unimodular entries and satisfying $HH^{\ast}%
=nI_{n}$ is called \textit{complex Hadamard}.

The aim of this article is to relate directly the three objects introduced one
to each other and to derive a connection between complex conference matrices ($CCMs$), complex Hadamard matrices
($CHMs$) and a class of complex
equiangular tight frames ($CETFs$). The correspondance here developed turns
out to be really fruitful : on the one hand we construct new, previously
unknown parametric families of complex conference matrices and of complex
Hadamard matrices of square orders, on the other hand we show amongst other
that for any integer $k$ such that $2k=p^{\alpha}+1\equiv2$
$(\operatorname{mod}4)$, $p$ prime, $\alpha$ non-negative integer, there is a family of
$(2k,k)$ complex equiangular tight frames which depends on a complex number of
modulus $1$, and for any $\beta\in\mathbb{N}^{\ast}$, there exists a family of
$((4k)^{2^{\beta}},\frac{1}{2}(4k)^{2^{\beta-1}}((4k)^{2^{\beta-1}}\pm1))$
complex equiangular tight frames which depends on two complex numbers of
modulus one. For instance we have the existence of a family of $(2k,k)$
$CETFs$ coming from fourth root Seidel matrices. This yields amongst other the
existence of $(6,3)$, $(10,5)$, $(14,7)$, $(18,9)$ $CETFs$ which have been
found in \cite{DHS}.
First let us survey the present knowledge on $(2k,k)$ equiangular tight frames.
\begin{enumerate}
\item Any real conference symmetric matrix $C$ of order $2k$ is a Seidel matrix with two eigenvalues and leads to
a $(2k,k)$ $RETF$. In the same time the matrix $iC$ is a complex symmetric conference matrix of order $2k$.
\item Any real conference skew symmetric matrix $C$ of order $2k$ leads to a
Seidel matrix ($iC)$ with two eigenvalues and then to a $(2k,k)$ $CETF$. Note that the $2k$ vectors of this frame
generate a set of equiangular lines called in \cite{Id0} an $F$%
\textit{-regular }$2k$\textit{-tuple} in $\mathbb{CP}^{k-1}$. This is a tuple
in which all triples of lines are pairwise congruent. Related objects can also
be found in \cite{Id}. See also $A146890$ in the On-line
Encyclopedia of Integer sequences.
\item
Zauner constructed in his thesis (\cite {Z}) $(q+1,(q+1)/2)$ $CETFs$ for any odd prime power $q$. However it is shown in section $5$ that the associated Seidel matrices of these frames are real symmetric conference matrices or the product by $i$ of real skew symmetric conference matrices. That is Zauner's construction does not lead to new $(2k,k)$ $CETFs$ in comparison with examples 1 and 2. In the same time it is interesting to see how Zauner obtained again the Paley matrices using an additive character on $GF(q)$  the Galois field of order $q$.
\item
In \cite{DHS} the authors studied the existence and construction of Seidel matrices with two eigenvalues whose off diagonal entries are all fourth roots of unity. Among other things they constructed using an algorithm some $CETFs$ which do not arise from real skew symmetric matrices. Their associated Seidel matrices with fourth roots of unity can be obtained directly from  our one parametric family of $(2k,k)$ when the parameter is equal to $i$. Their results are then here improved.
\end{enumerate}
Of course example 2 is well-known but a continuous family of complex equiangular tight frames appears in this paper for the first time.
\subsection{Seidel matrices and CHMs}

Let $H$ be a complex Hadamard matrix of order $n$ and let us denote its rows
by $h_{1}$, $h_{2}$,...,$h_{n}$.

Consider the following block matrix, where the $(i,j)$ th entry of $K$ is the
block $h_{j}^{\ast}h_{i}$,%
\begin{equation}
K=\left(
\begin{array}
[c]{ccc}%
h_{1}^{\ast}h_{1} & ... & h_{n}^{\ast}h_{1}\\
... & ... & ...\\
h_{1}^{\ast}h_{n} & ... & h_{n}^{\ast}h_{n}%
\end{array}
\right)  .
\end{equation}
$\ $ It is well known that $K$ is an Hermitean complex Hadamard matrix of
order $n^{2}$ with constant diagonal $1$ \cite{So2}. Clearly we have the
useful following Lemma.

\begin{lemma}
$Q:=K-I$ is a Seidel matrix associated to a $(n^{2},\frac{n(n+1)}{2})$ $CETFs$.
\end{lemma}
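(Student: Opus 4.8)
The plan is to establish three things in turn: that $Q = K - I$ is a Seidel matrix, that it has exactly two distinct eigenvalues, and that the multiplicities of these eigenvalues produce the frame parameters $(n^{2},\tfrac{n(n+1)}{2})$.

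First I would verify the Seidel property directly from the properties of $K$ quoted from \cite{So2}. Since $K$ is Hermitian, $Q = K - I$ is self-adjoint; since $K$ has constant diagonal $1$, the diagonal of $Q$ is identically $0$; and since $K$ is complex Hadamard its off-diagonal entries are unimodular, so the off-diagonal entries of $Q$ all have modulus $1$. Thus $Q$ meets the definition of a Seidel matrix.

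Next I would pin down the spectrum of $K$. Being complex Hadamard of order $n^{2}$, $K$ satisfies $KK^{\ast}=n^{2}I_{n^{2}}$, and being Hermitian, $K^{\ast}=K$, so $K^{2}=n^{2}I_{n^{2}}$. Hence the eigenvalues of $K$ are $\pm n$, and consequently $Q=K-I$ has exactly the two eigenvalues $n-1$ and $-n-1$. To obtain the multiplicities $m_{+}$ and $m_{-}$ of $n$ and $-n$ I would use the trace: on the one hand $\operatorname{tr} K = n^{2}$ because the diagonal is all ones, while on the other hand $\operatorname{tr} K = n(m_{+}-m_{-})$. Together with $m_{+}+m_{-}=n^{2}$ this gives $m_{+}=\tfrac{n(n+1)}{2}$ and $m_{-}=\tfrac{n(n-1)}{2}$.

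Finally, since $Q$ is a Seidel matrix with two eigenvalues, the equivalence recalled in the Introduction yields an equiangular Parseval frame of $N=n^{2}$ vectors, and its dimension $k$ is read off from the formula for $VV^{\ast}$: because the coefficient $\sqrt{\frac{k(N-k)}{N^{2}(N-1)}}$ is positive, the eigenvalue $1$ of the rank-$k$ projection $VV^{\ast}$ corresponds to the larger eigenvalue $n-1$ of $Q$, whose multiplicity is $\tfrac{n(n+1)}{2}$; hence $k=\tfrac{n(n+1)}{2}$, giving an $(n^{2},\tfrac{n(n+1)}{2})$ $CETF$. The computation is essentially routine once the Hermitian Hadamard structure of $K$ is in hand; the only point requiring care is this last one, namely matching the rank of the projection to the multiplicity of the correct (positive) eigenvalue rather than inadvertently to $m_{-}$.
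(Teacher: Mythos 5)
Your proof is correct and is exactly the argument the paper leaves implicit: the paper offers no proof at all, stating the lemma as ``clear'' from the cited fact that $K$ is an Hermitean complex Hadamard matrix of order $n^{2}$ with constant diagonal $1$, together with the equivalence from \cite{HP} between Seidel matrices with two eigenvalues and equiangular Parseval frames. Your write-up (the Seidel verification, $K^{2}=n^{2}I$ giving eigenvalues $\pm n$, the trace count $m_{+}=\tfrac{n(n+1)}{2}$, and matching the rank of $VV^{\ast}$ to the larger eigenvalue $n-1$) is the intended routine computation, carried out without error.
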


In \cite{BE} the authors applied this block constuction to the Fourier matrix
$H=\left[  e^{\frac{2\pi i(k-1)(l-1)}{n}}\right]  _{k,l}$ of order $n$, from
which they concluded that for any $n\geq2$ there exists an equiangular
$(n^{2},\frac{n(n+1)}{2})$ frame. Butson constructed in $\cite{BUT}$ for any
prime $p$, complex Hadamard matrices of order $2^{a}p^{b},0\leq a\leq b$,
containing $pth$ roots of unity. In \cite{So2} the block construction $(1)$
applied to Butson matrices yields the existence of an equiangular
$(4^{a}p^{2b},2^{a-1}p^{b}(2^{a}p^{b}+1))$ frame\textit{. }For instance there
is a $(36,21)$ C\textit{ETF }arising from Seidel\textit{\ }matrices containing
cube roots of unity.

\section{Constructing complex conference matrices from real symmetric conference
matrices}

In the following we will restrict ourselves to real symmetric conference matrices called
Paley matrices : those of orders $p^{\alpha}+1\equiv2$ $(\operatorname{mod}4)$. It is shown in \cite{GS}
that any Paley matrix of order $n=2k=p^{\alpha}+1\equiv2$ $(\operatorname{mod}%
4)$ is equivalent to a
matrix of the form%

\[
C=\left(
\begin{array}
[c]{cccc}%
0 & 1 & j^{T} & j^{T}\\
1 & 0 & -j^{T} & j^{T}\\
j & -j & A & B\\
j & j & B^{T} & -A
\end{array}
\right)  ,
\]

where $A$ and $B$ are square matrices of order $k-1$, $j$ is the
$(k-1)\times1$ matrix consisting solely of $1^{\prime}s$. Because this form of $C$ slightly differs from the form of \cite{GS} we
give here the proof for completeness of this paper.

Let $V$ \ be a vector space of dimension $2$ over $GF(p^{\alpha})$ the Galois
field of order $p^{\alpha}$, $p$ prime, $\alpha$ non-negative integer. Let
$\varkappa$ denote the Legendre symbol, defined by $\varkappa(0)=0$,
$\varkappa(a)=1$ or $-1$ according as $a$ is or not a square in $GF(p^{\alpha
})$. Then clearly $\varkappa(-1)=1$. Now let $x$, $y$ be two independent vectors
in $V$, $\alpha_{i}$ being the elements of $GF(p^{\alpha})$ and consider
$\varkappa\det$ where $\det$ is any alternating bilinear form on $V$. The
Paley matrix $C$ is defined as follows : $C=\left[  \varkappa\det(y_{i}%
,y_{j})\right]  _{i,j=0,...,p^{\alpha}}$ where $y_{0}=x$, $y_{j}=y+\alpha
_{j}x$ for all $j=1,...,p^{\alpha}$. The result follows by taking
$\varkappa\det(y,x)=1$ and by arranging the vectors as follows : $x$, $y$,
$y+x\eta$, $y+x\eta^{3}$,..., $y+x\eta^{p^{\alpha}-2}$, $y+x\eta^{2}$,
$y+x\eta^{4}$,..., $y+x\eta^{p^{\alpha}-1}$ where $\eta$ denotes any primitive
element of $GF(p^{\alpha})^{\ast}$.

The matrices $A$ and $B$ satisfy :
\begin{align}
^{t}A  &  =A,AJ=J,BJ=JB=0,\\
AB  &  =BA,BB^{T}=B^{T}B,\\
A^{2}+BB^{T}  &  =(2k-1)I-2J,
\end{align}
and $\ J$ \ is the matrix of order $k-1$ which entries are $1$'s. These
equations were not considered in \cite{GS} since their goal was different from ours.

\begin{theorem}
Let $k\geq 3$ be an odd integer such that $2k=p^{\alpha}+1$. There exists an infinite family of complex conference matrices of order $2k$
depending on two complex parameters $a$ and $b$ of modulus $1$.
\end{theorem}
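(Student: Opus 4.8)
The plan is to deform the Paley form of $C$ displayed above by inserting the two unit parameters into the two essentially independent off-diagonal features of the block decomposition, and then to verify the conference identity directly. Concretely, I would take
\[
C(a,b)=\left(
\begin{array}{cccc}
0 & a & aj^{T} & aj^{T}\\
\overline{a} & 0 & -j^{T} & j^{T}\\
\overline{a}\,j & -j & A & bB\\
\overline{a}\,j & j & \overline{b}\,B^{T} & -A
\end{array}
\right),
\]
so that $a$ scales the first row (and $\overline{a}$ the first column) against the distinguished pair $\{1,2\}$, while $b$ twists only the $B$-block against its transpose. The $a$-part is just a row/column scaling of the real Paley matrix, but the $b$-part is not reachable by the two operations (it leaves the $j$-blocks untouched), so the two parameters are genuinely free. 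By construction $C(a,b)$ is Hermitian: the conjugate transpose of the $(3,4)$ block $bB$ is $\overline{b}\,B^{T}$, and $A=A^{T}$ is real. Its diagonal is zero, and since the off-diagonal entries of $A$, the entries of $B$, and the entries of $j$ are all $\pm1$ while $|a|=|b|=1$, every off-diagonal entry has modulus $1$. Thus $C(a,b)$ is a candidate complex conference matrix, and it remains only to check $C(a,b)C(a,b)^{\ast}=(2k-1)I_{2k}$.

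Because $C(a,b)$ is Hermitian this reduces to $C(a,b)^{2}=(2k-1)I_{2k}$, which I would verify block by block against the partition of sizes $(1,1,k-1,k-1)$. The two scalar diagonal blocks and the $(1,2)$ block are immediate from $|a|^{2}=1$, $j^{T}j=k-1$, and the cancellation of the two $\pm(k-1)$ contributions. For the four mixed blocks $(1,3),(1,4),(2,3),(2,4)$ I would first use $j^{T}A=j^{T}$ (which follows from $Aj=j$ in $(2)$) and $j^{T}B=j^{T}B^{T}=0$ (from $Bj=0$), after which each block collapses to a scalar multiple of $j^{T}$ of the form $(a-a)j^{T}$ or $(|a|^{2}-1)j^{T}$, all of which vanish; this is precisely the role of placing the factor $a$ on the first row and $\overline{a}$ on the first column.

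For the two $(k-1)\times(k-1)$ diagonal blocks I would collect the contributions separately: columns $1$ and $2$ each feed back a rank-one term $jj^{T}=J$, giving $+2J$, while the inner blocks contribute $A^{2}+|b|^{2}BB^{T}=A^{2}+BB^{T}$ in position $(3,3)$ and $A^{2}+B^{T}B$ in position $(4,4)$. Relation $(4)$, together with $BB^{T}=B^{T}B$ from $(3)$, then turns each into $2J+\bigl((2k-1)I-2J\bigr)=(2k-1)I$. The remaining blocks $(3,4)$ and $(4,3)$ collapse to $(\text{scalar})\,J+b(AB-BA)$ and its conjugate transpose; both vanish because $AB=BA$ (hence also $AB^{T}=B^{T}A$ after transposing and using $A=A^{T}$) and because the two $J$-terms coming from columns $1$ and $2$ enter with opposite signs.

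The one point that needs genuine attention -- the main obstacle -- is this cancellation in the $(3,3)$ and $(4,4)$ blocks: the quadratic identity $(4)$ carries the unavoidable correction $-2J$, and the construction succeeds only because the two distinguished unit columns (rows $1$ and $2$) return exactly $+2J$. Everything involving the parameters is then forced: $a$ and $b$ survive the computation solely through the moduli $|a|^{2}=|b|^{2}=1$ and through the commutation $AB=BA$, so they drop out of $C(a,b)^{2}$ entirely. This gives $C(a,b)C(a,b)^{\ast}=(2k-1)I_{2k}$ for every pair $(a,b)$ on the unit circle, i.e.\ an infinite two-parameter family of complex conference matrices of order $2k$, as claimed.
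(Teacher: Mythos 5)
Your block verification is correct: with relations $(2)$--$(4)$ every block of $C(a,b)C(a,b)^{\ast}$ collapses exactly as you describe, so your matrix is a complex conference matrix of order $2k$ for all unimodular $a,b$, and the method (inserting unimodular parameters into the Goethals--Seidel form of the Paley matrix, then checking the conference identity blockwise) is the same as the paper's. The problem is where you put the parameters. Your $a$ sits only on the first row and first column, so
\[
C(a,b)=D\,C(1,b)\,D^{\ast},\qquad D=\mathrm{diag}(a,1,\dots,1),
\]
which is precisely Operation 1 of the introduction. Hence, modulo the paper's own equivalence relation, your family has a single essential parameter: for each fixed $b$, all the matrices $C(a,b)$ lie in one equivalence class. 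You concede that ``the $a$-part is just a row/column scaling,'' but the conclusion you draw --- that ``the two parameters are genuinely free'' --- does not follow: your non-reachability remark applies to $b$ only, and the scaling nature of $a$ is exactly what makes it spurious. So if the theorem is read as asserting a family that genuinely depends on two parameters (its evident intent), your construction establishes something weaker, namely the one-parameter Hermitian family $C(1,b)$ dressed up by a trivial operation.

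The paper avoids this by threading $a$ through the second row/column and all four lower-right blocks:
\[
C(a,b)=\left(
\begin{array}{cccc}
0 & 1 & j^{T} & j^{T}\\
1 & 0 & -aj^{T} & aj^{T}\\
j & -aj & aA & abB\\
j & aj & a\overline{b}B^{T} & -aA
\end{array}
\right).
\]
For non-real $a$ this matrix is not Hermitian (compare the $(2,3)$ block $-aj^{T}$ with the $(3,2)$ block $-aj$), and since Operations 1 and 2 preserve Hermitianity, it cannot be reduced to the Hermitian $a=1$ case by the admissible operations; the paper's two parameters therefore survive the equivalence relation in a way yours do not. (To be fair, neither proof addresses inequivalence under the broader monomial equivalence natural for non-Hermitian conference matrices, and your family does have the virtue of being Hermitian for every $(a,b)$, hence directly a Seidel matrix --- but for that purpose the subfamily $C(1,b)$, which is what the paper actually uses in Theorem 6, already suffices.) To repair your argument, either adopt the paper's placement of $a$ or supply a proof that your parameters cannot be removed by the stated operations.
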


\begin{proof}
Consider the matrix%
\[
C(a,b)=\left(
\begin{array}
[c]{cccc}%
0 & 1 & j^{T} & j^{T}\\
1 & 0 & -aj^{T} & aj^{T}\\
j & -aj & aA & abB\\
j & aj & a\overline{b}B^{T} & -aA
\end{array}
\right)  \text{.}%
\]
A direct calculation with equations $(2)$, $(3)$ and $(4)$ leads to
$C(a,b)C^{\ast}(a,b)=(2k-1)I.$ That is $C(a,b)$ is a complex conference matrix
of order $2k$.
\end{proof}

The matrix $C(a,b)$ of order $6$ appears already in \cite{D}. All the other
cases are new in comparison with that paper.

\begin{corollary}
Let $k\geq 3$ be an odd integer such that $2k=p^{\alpha}+1$. There exists a\ parametric family of complex Hadamard matrices of order $4k$.
\end{corollary}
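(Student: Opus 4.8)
The plan is to invoke the block construction introduced at the very start of the paper, applied to the complex conference matrix just produced.

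Recall that immediately before the definition of a complex Hadamard matrix, the excerpt states (and asserts is provable "by construction") that for any complex conference matrix $C_n$ of order $n$, the matrix
\[
H_{2n}=\left(
\begin{array}{cc}
C_n+I_n & C_n^{\ast}-I_n\\
C_n-I_n & -C_n^{\ast}-I_n
\end{array}
\right)
\]
is a complex Hadamard matrix of order $2n$.

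First I would take $C=C(a,b)$, the complex conference matrix of order $2k$ constructed in Theorem~4, which depends on the two unimodular parameters $a$ and $b$. Since $2n=2\cdot(2k)=4k$, feeding $C(a,b)$ into the displayed block formula produces a matrix $H_{4k}$ of order exactly $4k$, the order claimed in the corollary. Because the entries of $C(a,b)$ are unimodular off the diagonal and zero on it, the entries of each block $C\pm I$ and $-C^{\ast}\pm I$ are again unimodular (the diagonal contributes $\pm 1$, the off-diagonal the unimodular entries of $C$), so $H_{4k}$ has all entries of modulus $1$ as a complex Hadamard matrix must.

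Next I would verify that $H_{4k}H_{4k}^{\ast}=4k\,I_{4k}$. This is the routine block computation already alluded to by the phrase "by construction"; expanding the four blocks of $H_{2n}H_{2n}^{\ast}$ and using only $CC^{\ast}=(2k-1)I$ together with $CC^{\ast}=C^{\ast}C$ collapses every off-diagonal block to zero and every diagonal block to $2k\,I_{2k}=4k\cdot\tfrac12 I$, giving the Hadamard relation. Since $a$ and $b$ range over all unimodular complex numbers and distinct parameter values yield genuinely distinct $C(a,b)$, the resulting family of $H_{4k}$ is a genuine parametric family, as asserted.

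The only point requiring any care, and hence the main obstacle, is confirming that the generic block identity $C_nC_n^{\ast}=(n-1)I_n$ alone suffices to make $H_{2n}$ Hadamard, rather than needing extra structure such as symmetry of $C$. I expect that it does suffice, because the off-diagonal blocks of $H_{2n}H_{2n}^{\ast}$ reduce using $CC^{\ast}=C^{\ast}C$, which holds for $C(a,b)$ by the computation underlying Theorem~4; so the corollary follows with no further hypotheses.
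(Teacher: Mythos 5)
Your proposal is correct and is precisely the paper's proof: the paper's entire argument is to exhibit $H(a,b)$, obtained by feeding the conference matrix $C(a,b)$ of Theorem~4 into the block construction stated in the introduction, exactly as you do. Two small remarks: the diagonal blocks of $H_{4k}H_{4k}^{\ast}$ collapse to $CC^{\ast}+C^{\ast}C+2I=4k\,I_{2k}$, not $2k\,I_{2k}$ as you wrote (which is what the required identity $H_{4k}H_{4k}^{\ast}=4k\,I_{4k}$ demands), and the normality $CC^{\ast}=C^{\ast}C$ you invoke needs no appeal to the computation behind Theorem~4, since $CC^{\ast}=(2k-1)I$ already forces $C^{\ast}C=(2k-1)I$ for a square matrix.
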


\begin{proof}
The matrix $H(a,b)=\left(
\begin{array}
[c]{cc}%
C(a,b)+I & C^{\ast}(a,b)-I\\
C(a,b)-I & -C^{\ast}(a,b)-I
\end{array}
\right)  $ yields the solution.
\end{proof}

\begin{theorem}
Let $k\geq 3$ be an odd integer such that $2k=p^{\alpha}+1$. There exists an infinite family of Hermitean complex Hadamard matrices of
order $16k^{2}$ depending on two complex parameters $a$ and $b$ of modulus $1
$ with a constant diagonal of $1^{\prime}s$.
\end{theorem}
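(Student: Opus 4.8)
The plan is to feed the complex Hadamard matrix $H(a,b)$ of order $4k$ produced in Corollary~3 directly into the block construction recorded in Lemma~1. First I would set $n=4k$ and let $h_{1},\dots,h_{4k}$ denote the rows of $H(a,b)$. Then I form the $(4k)^{2}\times(4k)^{2}$ block matrix $K(a,b)$ whose $(i,j)$-th block is $h_{j}^{\ast}h_{i}$, exactly as in equation~(1).

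The essential point is that the general fact quoted from \cite{So2} and recorded in Lemma~1 guarantees, for \emph{any} complex Hadamard matrix of order $n$, that the associated matrix $K$ is a Hermitean complex Hadamard matrix of order $n^{2}$ with constant diagonal $1$. Since by Corollary~3 the matrix $H(a,b)$ is complex Hadamard of order $4k$ for every choice of unimodular $a$ and $b$, applying this with $n=4k$ immediately yields that $K(a,b)$ is Hermitean complex Hadamard of order $(4k)^{2}=16k^{2}$. The constant-diagonal claim I would verify in one line: the $(i,i)$-th block of $K(a,b)$ is $h_{i}^{\ast}h_{i}$, whose $(l,l)$-entry is $|h_{i,l}|^{2}=1$ because $H(a,b)$ has unimodular entries, so every diagonal entry of $K(a,b)$ equals $1$. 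Because $H(a,b)$ varies with the two unimodular parameters $a$ and $b$, so does $K(a,b)$, which produces the desired infinite family.

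There is essentially no hard step here, since Lemma~1 does all the work and is insensitive to which complex Hadamard matrix is used as input; the only substantive ingredient is that $H(a,b)$ is genuinely complex Hadamard, which is precisely the content of Corollary~3. If any point warrants a line of care, it is merely confirming that the construction remains valid uniformly across the whole parameter range $|a|=|b|=1$, and this is automatic because Corollary~3 holds for all such $a,b$.
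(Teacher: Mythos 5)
Your proposal is correct and follows exactly the paper's own argument: the paper's proof is the single line ``it suffices to apply the block construction $(1)$ to $H(a,b)$,'' which is precisely your plan of feeding the order-$4k$ matrix $H(a,b)$ from Corollary~3 into Lemma~1's construction to get a Hermitean complex Hadamard matrix of order $(4k)^{2}=16k^{2}$ with unit diagonal. Your extra verification of the diagonal entries is a harmless elaboration of the fact already quoted from \cite{So2}.
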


\begin{proof}
It suffices to apply the block construction $(1)$ to $H(a,b)$.
\end{proof}

Applying Lemma 1 to $K(a,b)$ and repeating the same Lemma to the obtained
matrix and repeating this operation indefinitely implies the following.

\begin{theorem}
For any odd integer $k\geq 3$ such that $2k=p^{\alpha}+1$ and for any $\beta\in\mathbf{N}^{\ast}$ there exists a $((4k)^{2^{\beta}}%
,\frac{(4k)^{2^{\beta-1}}((4k)^{2^{\beta-1}}\pm1)}{2})$ $CETF$.
\end{theorem}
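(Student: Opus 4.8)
The plan is to iterate the block construction $(1)$, exploiting the one structural fact that drives everything: applied to any complex Hadamard matrix of order $N$, construction $(1)$ returns a Hermitean complex Hadamard matrix of order $N^{2}$ with constant diagonal $1$ (the fact quoted from \cite{So2} and used in the preceding Theorem). Being Hermitean with diagonal $1$ is exactly what makes $K-I$ a Seidel matrix in Lemma 1, and being complex Hadamard is exactly what makes the output admissible as fresh input to $(1)$. This closure property is what permits the operation to be repeated indefinitely, and it is the sole point that must be handled with care.

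First I would set up an induction on $\beta$. For the base case $\beta=1$ I start from the complex Hadamard matrix $H(a,b)$ of order $4k$ furnished by the Corollary, apply $(1)$ to obtain $K(a,b)$ of order $(4k)^{2}=16k^{2}$ (the preceding Theorem), and invoke Lemma 1 to conclude that $K(a,b)-I$ is the Seidel matrix of a $\bigl((4k)^{2},\frac{(4k)(4k+1)}{2}\bigr)$ $CETF$; since $2^{1}=2$ and $2^{0}=1$, this is precisely the $\beta=1$, upper-sign case. For the inductive step I suppose the level-$(\beta-1)$ stage has produced a Hermitean complex Hadamard matrix $K_{\beta-1}$ of order $(4k)^{2^{\beta-1}}$. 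Because $K_{\beta-1}$ is itself complex Hadamard, I feed it back into $(1)$ to obtain a Hermitean complex Hadamard matrix $K_{\beta}$ of order $\bigl((4k)^{2^{\beta-1}}\bigr)^{2}=(4k)^{2^{\beta}}$, and Lemma 1 then gives that $K_{\beta}-I$ is the Seidel matrix of an equiangular tight frame of size $\bigl((4k)^{2^{\beta}},\frac{(4k)^{2^{\beta-1}}((4k)^{2^{\beta-1}}+1)}{2}\bigr)$. The only thing to record is the order recursion $N_{\beta}=N_{\beta-1}^{2}$, which yields $N_{\beta}=(4k)^{2^{\beta}}$.

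To obtain the lower ($-$) sign I would pass to the complementary (Naimark) frame. Negating a Seidel matrix with two eigenvalues again produces a Seidel matrix with two eigenvalues, and in the formula $VV^{\ast}=\frac{k}{n}I_{n}+\sqrt{\frac{k(n-k)}{n^{2}(n-1)}}\,Q$ replacing $VV^{\ast}$ by its orthogonal complement $I_{n}-VV^{\ast}$ replaces $(k,Q)$ by $(n-k,-Q)$; hence a $(n,k)$ $CETF$ yields a $(n,n-k)$ $CETF$. With $n=(4k)^{2^{\beta}}$ and $k=\frac{m(m+1)}{2}$, where $m=(4k)^{2^{\beta-1}}$ so that $n=m^{2}$, a short computation gives $n-k=m^{2}-\frac{m(m+1)}{2}=\frac{m(m-1)}{2}=\frac{(4k)^{2^{\beta-1}}((4k)^{2^{\beta-1}}-1)}{2}$, which is exactly the lower-sign case. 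Both signs are therefore obtained together at every level $\beta$.

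The only genuine obstacle is the closure property invoked at the outset — that $(1)$ really does send complex Hadamard matrices to Hermitean complex Hadamard matrices with constant diagonal — since everything downstream is a straightforward induction together with elementary arithmetic of the exponents $2^{\beta}$. That closure, however, is already the content of the cited result from \cite{So2} used in the preceding Theorem, so beyond careful bookkeeping of the orders $N_{\beta}=(4k)^{2^{\beta}}$ and of the dimension pair $\tfrac{m(m\pm1)}{2}$, no new work is required.
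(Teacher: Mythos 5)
Your proposal is correct and takes essentially the same route as the paper, whose entire proof is the one-line remark that one applies Lemma 1 to $K(a,b)$ and repeats the block construction $(1)$ indefinitely, the two signs corresponding to a frame and its complement. Your explicit induction on $\beta$, the closure property of construction $(1)$, and the Naimark-complement computation $m^{2}-\frac{m(m+1)}{2}=\frac{m(m-1)}{2}$ for the lower sign merely fill in details the paper leaves implicit (and make precise its loose phrase that the signs ``correspond to the frame and its conjugate'').
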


$\pm$ correpond to the frame and its conjugate.

Interestingly, the redundancy of this frame is the general term of a sequence
of $\beta$ which converges to $2.$

\begin{theorem}
For any integer $k\geq3$ such that $2k=p^{\alpha}+1\equiv2$ $(\operatorname{mod}4)$ there
exists a $(2k,k)$ $CETF$.

\begin{proof}
$C(1,b)$ is a complex Hermitean conference matrix wich leads to $(2k,k)$
$CETFs$.
\end{proof}
\end{theorem}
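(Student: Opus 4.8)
The plan is to exhibit a single Seidel matrix of order $2k$ with exactly two eigenvalues of equal multiplicity $k$, since by the correspondence recalled in the introduction such a matrix is precisely the signature matrix of a $(2k,k)$ $CETF$. The natural candidate is the conference matrix $C(a,b)$ of Theorem 2 specialized so as to become self-adjoint. Since the congruence $2k=p^{\alpha}+1\equiv2$ $(\operatorname{mod}4)$ forces $k$ to be odd, the hypotheses of Theorem 2 are met, and I would set $a=1$ to obtain the matrix $C(1,b)$, which remains a complex conference matrix of order $2k$ for every $b$ with $\left\vert b\right\vert =1$.

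First I would verify that $C(1,b)$ is Hermitean. The two scalar blocks in the top-left $2\times2$ corner are real; the blocks $j^{T}$ and $-j^{T}$ are carried to $j$ and $-j$ under conjugate transposition; the diagonal block $A$ is real symmetric by $(2)$, hence self-adjoint; and the off-diagonal blocks $bB$ and $\overline{b}B^{T}$ are interchanged under conjugate transposition because $\overline{bB}^{T}=\overline{b}\,B^{T}$, $B$ being real. Thus $C(1,b)^{\ast}=C(1,b)$. Being simultaneously Hermitean and a conference matrix, it satisfies $C(1,b)^{2}=C(1,b)C(1,b)^{\ast}=(2k-1)I$, so its spectrum is contained in $\left\{  +\sqrt{2k-1},-\sqrt{2k-1}\right\}  $.

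Next I would pin down the multiplicities from the trace. Because the diagonal of $C(1,b)$ vanishes, $\operatorname{tr}C(1,b)=0$; writing $m_{+}$ and $m_{-}$ for the multiplicities of the two eigenvalues, this gives $(m_{+}-m_{-})\sqrt{2k-1}=0$, whence $m_{+}=m_{-}$, and together with $m_{+}+m_{-}=2k$ this forces $m_{+}=m_{-}=k$. Therefore $C(1,b)$ is a Seidel matrix with two distinct eigenvalues of equal multiplicity $k$, and hence yields the desired $(2k,k)$ $CETF$.

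The only genuine computation is the Hermitean check, and even that is immediate once the symmetry relations $(2)$ are in hand; the spectral conclusion is then forced by the identity $C^{2}=(2k-1)I$ together with the vanishing trace, so I expect no real obstacle. The sole subtlety is to observe that the specialization $a=1$ is exactly what converts the general, non-self-adjoint matrix $C(a,b)$ into a self-adjoint one while leaving $b$ free, so that in fact an entire one-parameter family of such frames is produced.
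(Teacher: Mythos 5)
Your proposal is correct and follows exactly the paper's route: the paper's one-line proof also specializes the conference matrix of Theorem 2 to $C(1,b)$, observes it is Hermitean, and invokes the standard fact that a Hermitean conference matrix of order $2k$ (two eigenvalues $\pm\sqrt{2k-1}$, each of multiplicity $k$ by the trace argument) is the Seidel matrix of a $(2k,k)$ $CETF$. You have merely written out the verification the paper leaves implicit, so there is nothing to criticize.
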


As above repeating the construction $(1)$ indefinitely yields the following.

\begin{corollary}
For any integer $k\geq3$ such that $2k=p^{\alpha}+1\equiv2$ $(\operatorname{mod}4)$ and for
any $\beta\in\mathbf{N}^{\ast}$ there exists a $((2k)^{2^{\beta}},\frac{(2k)^{2^{\beta-1}}((2k)^{2^{\beta-1}}\pm1)}{2})$
$CETF$.
\end{corollary}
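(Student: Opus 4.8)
The plan is to imitate, step for step, the indefinite iteration of the block construction $(1)$ that produced the $((4k)^{2^{\beta}},\frac{(4k)^{2^{\beta-1}}((4k)^{2^{\beta-1}}\pm1)}{2})$ family; the only genuinely new point is to manufacture a \emph{seed} complex Hadamard matrix of order exactly $2k$, since the order-$4k$ matrix $H(a,b)$ used there is the wrong size here.

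First I would recall from the preceding theorem that, under the hypothesis $2k=p^{\alpha}+1\equiv 2\ (\operatorname{mod}4)$, the matrix $C(1,b)$ is a \emph{Hermitean} complex conference matrix of order $2k$. Hence $C(1,b)^{\ast}=C(1,b)$ and, being a conference matrix, $C(1,b)^{2}=C(1,b)C(1,b)^{\ast}=(2k-1)I$. I then set $M:=C(1,b)+iI$ and check that $M$ is a complex Hadamard matrix of order $2k$: its off-diagonal entries have modulus $1$ because $C(1,b)$ is a conference matrix and its diagonal entries are all equal to $i$, so every entry is unimodular, while $MM^{\ast}=(C(1,b)+iI)(C(1,b)-iI)=C(1,b)^{2}+I=2kI$. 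This is exactly where the congruence $2k\equiv 2\ (\operatorname{mod}4)$ (equivalently $k$ odd) enters: it is the symmetry of the underlying Paley matrix that makes $C(1,b)$ Hermitean, and Hermiticity is precisely what makes $C(1,b)+iI$ Hadamard.

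With the seed in hand I would run the same loop as before. Feeding $M$ into the block construction $(1)$ and invoking Lemma 1 produces a Hermitean complex Hadamard matrix of order $(2k)^{2}$ with constant diagonal $1$ whose associated Seidel matrix gives a $((2k)^{2},\frac{2k(2k+1)}{2})$ $CETF$, namely the case $\beta=1$. Since the output of $(1)$ is again a complex Hadamard matrix, I can apply $(1)$ to it, and so on. Formally, I would put $K_{0}:=M$ and let $K_{\beta}$ be the matrix obtained by applying the construction $(1)$ to $K_{\beta-1}$; an immediate induction shows that $K_{\beta}$ is a Hermitean complex Hadamard matrix of order $(2k)^{2^{\beta}}$ with constant diagonal $1$. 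Applying Lemma 1 to $K_{\beta-1}$, which has order $n=(2k)^{2^{\beta-1}}$, then yields that $K_{\beta}-I$ is the Seidel matrix of a $(n^{2},\frac{n(n+1)}{2})=((2k)^{2^{\beta}},\frac{(2k)^{2^{\beta-1}}((2k)^{2^{\beta-1}}+1)}{2})$ $CETF$; passing to the conjugate frame supplies the $-$ sign, which accounts for the $\pm$ in the statement.

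In short, the whole argument is the repetition of construction $(1)$ already carried out for the order-$4k$ seed, so no individual step is hard. The single idea that must be supplied is the first one: recognising that a Hermitean conference matrix $C$ furnishes, through $C+iI$, a complex Hadamard matrix of the same order $2k$, thereby providing a seed of the correct size for the iteration. That observation is the only real obstacle; everything after it is the routine self-reproducing application of Lemma 1.
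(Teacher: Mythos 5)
Your proof is correct and takes essentially the same route as the paper: the paper's own (one-line) argument is to repeat the block construction $(1)$ starting from the Hermitean conference matrix $C(1,b)$ of the preceding theorem, exactly the iteration you carry out. Your explicit seed step --- verifying that $M=C(1,b)+iI$ is a complex Hadamard matrix of order $2k$ via $MM^{\ast}=C(1,b)^{2}+I=2kI$ --- is precisely what the paper leaves implicit (it surfaces only in the later remark that $C(1,b)\pm iI$ is a quaternary Hadamard matrix when $b=\pm i$), so you have supplied a detail the paper omits rather than deviated from it.
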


Clearly we come up to the following proposition about the redundancy of these frames.

\begin{proposition}
The redundancy of the $((2k)^{2^{\beta}},\frac{(2k)^{2^{\beta-1}%
}((2k)^{2^{\beta-1}}\pm1)}{2})$ CETF is the general term of a sequence of
$\beta$ which converges to $2$.
\end{proposition}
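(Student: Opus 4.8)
The plan is to evaluate the redundancy as an explicit function of $\beta$ and then pass to the limit. First I would abbreviate $N=(2k)^{2^{\beta-1}}$, so that the two parameters of the frame become $n=(2k)^{2^{\beta}}=N^{2}$ and $m=\tfrac{1}{2}N(N\pm1)$; the first equality merely uses $(2k)^{2^{\beta}}=\big((2k)^{2^{\beta-1}}\big)^{2}$. Feeding these into the definition $r_{\beta}=n/m$ of the redundancy (recalled in the introduction) and cancelling one factor $N$ gives the closed form $r_{\beta}=\dfrac{2N^{2}}{N(N\pm1)}=\dfrac{2N}{N\pm1}=\dfrac{2}{1\pm 1/N}$, valid for both choices of sign.

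The second step is the limit $\beta\to\infty$. Since $2k\geq 6>1$, the quantity $N=(2k)^{2^{\beta-1}}$ tends to $+\infty$, whence $1/N\to0$ and $r_{\beta}\to2$ in either case. Thus $r_{\beta}$ is exactly the general term of a sequence in $\beta$ with limit $2$, which is precisely the assertion.

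I do not anticipate any genuine obstacle here: the computation is elementary once the exponents are tracked correctly. The only point deserving attention is recognising that the order $n$ is precisely the square of the integer $N$ that also appears inside the dimension $m$; this is what collapses the ratio to the simple expression $2N/(N\pm1)$ and makes the convergence to $2$ transparent.
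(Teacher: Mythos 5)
Your computation is correct and is exactly the argument the paper intends: the paper states this proposition without proof (introducing it with ``Clearly we come up to...''), and the only natural verification is the one you give, namely writing $N=(2k)^{2^{\beta-1}}$ so that the redundancy collapses to $\frac{2N}{N\pm1}=\frac{2}{1\pm 1/N}\to 2$ as $\beta\to\infty$. Nothing is missing; your write-up simply supplies the elementary calculation the author left implicit.
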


From Theorem $6$ we deduce for instance the existence of
$\ (6,3),(10,5),(14,7),$ $(18,9),(26,13)....CETFs$. It turns out that for any
odd integer $k$ smaller than $50$ we may construct a $(2k,k)$ $CETF$ except
possibly in the cases $k=11,17,23,29,33,35,39,43,47$.

If we put $b=\pm i$ then $H=C(1,b)\pm iI$ \ is a \textit{quaternary} complex
Hadamard matrix of order $2k$, that is its entries are fourth roots of unity (\cite{LSO}).
The block construction $(1)$ yields a quaternary Hadamard matrix of order
$4k^{2}$.

\section{Constructing complex Hermitean conference matrices from real
symmetric conference matrices}

The results of this section are a corollary of a theorem presented in $\cite{So}$, which provides parametrizations of complex
Hadamard matrices.

\begin{proposition}
If $D_{n}(t)$ is a parametric family of complex Hadamard matrices, coming from the " conference matrix construction " i.e. $D_{n}(t)=I+iC$ where $C$ is a real symmetric conference matrix of order $n$, then $C(t)=-i(D_{n}(t)-I)$ is a family of complex Hermitean conference
matrices.
\end{proposition}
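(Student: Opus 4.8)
The plan is to reverse the defining relation and check, one at a time, the requirements for $C(t)$ to be a complex Hermitean conference matrix. Writing the hypothesis as $D_{n}(t)=I+iC(t)$ with $C(t)=-i(D_{n}(t)-I)$, I first record the decisive algebraic equivalence: $C(t)$ is Hermitean if and only if $D_{n}(t)+D_{n}(t)^{\ast}=2I$. Indeed $C(t)^{\ast}=i(D_{n}(t)^{\ast}-I)$, so the equation $C(t)=C(t)^{\ast}$ rearranges exactly to $2I=D_{n}(t)+D_{n}(t)^{\ast}$. Since the seed of the construction is $I+iC$ with $C$ a real symmetric conference matrix and $(iC)^{\ast}=-iC$, the seed manifestly satisfies this relation, so the whole argument hinges on showing that the parametrization inherited from \cite{So} keeps it.

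Granting that $D_{n}(t)+D_{n}(t)^{\ast}=2I$ holds along the entire family, the remaining properties follow at once. Comparing diagonal entries in this identity gives $2\,\mathrm{Re}\,D_{n}(t)_{ii}=2$, and combined with $|D_{n}(t)_{ii}|=1$ (from $D_{n}(t)$ being Hadamard) this forces $D_{n}(t)_{ii}=1$, so $C(t)$ has zero diagonal. For $i\neq j$ we have $C(t)_{ij}=-iD_{n}(t)_{ij}$, of modulus $|D_{n}(t)_{ij}|=1$, so the off-diagonal entries are unimodular. Finally, since $C(t)$ is now Hermitean, I expand the Hadamard relation $D_{n}(t)D_{n}(t)^{\ast}=nI$ as $(I+iC(t))(I-iC(t))=I+C(t)^{2}=nI$, which yields $C(t)C(t)^{\ast}=C(t)^{2}=(n-1)I$. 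Hence every $C(t)$ is a complex Hermitean conference matrix, and letting $t$ range over the admissible parameters produces the asserted family.

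The one substantive point, and the step I expect to be the main obstacle, is justifying that the parametrization of complex Hadamard matrices provided by \cite{So} keeps the Hermitean part equal to the identity, i.e.\ that $D_{n}(t)+D_{n}(t)^{\ast}=2I$ is an invariant of the whole family rather than merely a property of the seed $I+iC$. I would read this off directly from the explicit form of the parametrizing theorem in \cite{So}: the parameters must enter only as phases applied in a conjugate-symmetric pattern to the off-diagonal entries of $iC$, leaving the diagonal fixed at $1$ and preserving the anti-Hermitean character of $D_{n}(t)-I$. Once this invariance is extracted from \cite{So}, the proposition becomes an immediate corollary of the computations above.
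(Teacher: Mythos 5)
Your algebra is correct, and it is worth noting that the paper itself offers essentially no proof of this proposition: it is stated as an immediate corollary of the parametrizing theorem of Sz\"{o}ll\"{o}si \cite{So}, and the only substantiation given is the explicit matrices $C_{6}(b)$, $C_{10}(a,b,c)$ and $C_{14}(a,b,c,d,e,f)$ displayed afterwards. Your proposal therefore supplies the skeleton the paper leaves implicit: the equivalence that $C(t)$ is Hermitean precisely when $D_{n}(t)+D_{n}(t)^{\ast}=2I$, and the derivation from this identity plus unimodularity of the entries that the diagonal of $D_{n}(t)$ is constantly $1$ (hence $C(t)$ has zero diagonal), that the off-diagonal entries of $C(t)$ are unimodular, and, via $(I+iC(t))(I-iC(t))=nI$, that $C(t)C(t)^{\ast}=C(t)^{2}=(n-1)I$. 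The one step you defer --- that the parametrization of \cite{So} keeps $D_{n}(t)-I$ anti-Hermitean with fixed diagonal --- is exactly the step the paper also defers; moreover your guess as to why it holds (parameters entering as phases in a conjugate-symmetric pattern on pairs of rows and the corresponding columns) is precisely what the paper's examples exhibit: $C_{6}(b)$ is built from the pair of rows $(2,3)$, $C_{10}(a,b,c)$ from the pairs $(2,10)$, $(3,9)$, $(5,7)$, with each parameter $b$ appearing as $\overline{b}$ in the transposed position. So your treatment is, if anything, more rigorous than the paper's own; in both cases the invariance along the family is not proved abstractly but is ultimately certified by direct inspection of the displayed matrices.
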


It appears that the construction presented in $\cite{So}$ and improved in \cite{LSO} also works for complex Hermitean conference matrices. We may introduce more parameters.
In the following we used that
construction to obtain the families stemming from the real conference matrices
$C_{6}$, $C_{10}$ and $C_{14}$ which are unique up to equivalence. It appears
that we may introduce one parameter in $C_{6}$, two parameters in $C_{10}$ and
six parameters in $C_{14}$.
\[
C_{6}(b)=\left(
\begin{array}
[c]{cccccc}%
0 & 1 & 1 & 1 & 1 & 1\\
1 & 0 & -1 & b & 1 & -b\\
1 & -1 & 0 & -b & 1 & b\\
1 & \overline{b} & -\overline{b} & 0 & -1 & 1\\
1 & 1 & 1 & -1 & 0 & -1\\
1 & -\overline{b} & \overline{b} & 1 & -1 & 0
\end{array}
\right)  .
\]

To construct the matrix $C_{6}(b)$ we consider the pair of rows $(2,3)$. If we
choice another suitable pair of rows we do not obtain the same equivalence
class, for instance if we choice the pair $(2,4)$ and interchange the second
and third rows and the corresponding columns we obtain the matrix $C_{6}(1,b)$
of Theorem $6$ which is permutation equivalent to $C_{6}(\overline{b})$.
Although $C_{6}(\overline{b})$ is the conjugate of $C_{6}(b)$ they are not
permutation equivalent, except in cases $b\in\{\pm1,\pm i\}$. This means that
the two matrices lead to two non congruent $(6,3)$ $CETFs$, whereas in
$\mathbb{CP}^{2}$ the two $6$-tuples of lines generated by the frames are
congruent \cite{BET}. If we consider now two unimodular complex numbers $b,b^{\prime
}\notin$ $\{\pm1,\pm i\}$ such that $b^{\prime}\neq b$ and $b^{\prime}\neq-b$
then the matrices $C_{6}(b)$ and $C_{6}(b^{\prime})$ are non equal normal
forms and they are not permutation equivalent. This means that we have an
infinite family depending on one parameter of non equivalent complex Hermitean
conference matrices of order $6$, or equivalently we have an infinite family
depending on one parameter of non congruent $(6,3)$ $CETFs$.%

\[
C_{10}(a,b,c)=\left(
\begin{array}
[c]{cccccccccc}%
0 & 1 & 1 & 1 & 1 & 1 & 1 & 1 & 1 & 1\\
1 & 0 & a\overline{b} & a & \overline{c} & 1 & -\overline{c} & -a &
-a\overline{b} & -1\\
1 & \overline{a}b & 0 & -b & -\overline{c} & 1 & \overline{c} & b & -1 &
-\overline{a}b\\
1 & \overline{a} & -\overline{b} & 0 & 1 & -1 & 1 & -1 & \overline{b} &
-\overline{a}\\
1 & c & -c & 1 & 0 & -1 & -1 & 1 & -c & c\\
1 & 1 & 1 & -1 & -1 & 0 & -1 & -1 & 1 & 1\\
1 & -c & c & 1 & -1 & -1 & 0 & 1 & c & -c\\
1 & -\overline{a} & \overline{b} & -1 & 1 & -1 & 1 & 0 & -\overline{b} &
\overline{a}\\
1 & -\overline{a}b & -1 & b & -\overline{c} & 1 & \overline{c} & -b & 0 &
\overline{a}b\\
1 & -1 & -a\overline{b} & -a & \overline{c} & 1 & -\overline{c} & a &
a\overline{b} & 0
\end{array}
\right)  .
\]
As in $\cite{So}$ the considered pairs of rows are $(2,10)$, $(3,9)$ and
$(5,7)$.%

\newpage
\[
C_{14}(a,b,c,d,e,f)=
\]%
\[
\left(
\begin{array}
[c]{cccccccccccccc}%
0 & 1 & 1 & 1 & 1 & 1 & 1 & 1 & 1 & 1 & 1 & 1 & 1 & 1\\
1 & 0 & -1 & a\overline{b} & -a\overline{b} & -\overline{c} & a & \overline{e}
& \overline{c} & 1 & -a & -\overline{e} & a & -a\\
1 & -1 & 0 & -a\overline{b} & a\overline{b} & -\overline{c} & -a &
\overline{e} & \overline{c} & 1 & a & -\overline{e} & -a & a\\
1 & \overline{a}b & -\overline{a}b & 0 & -1 & b & -b & -\overline{e} & b & 1 &
\overline{f} & \overline{e} & -b & -\overline{f}\\
1 & -\overline{a}b & \overline{a}b & -1 & 0 & -b & b & -\overline{e} & -b &
1 & \overline{f} & \overline{e} & b & -\overline{f}\\
1 & -c & -c & \overline{b} & -\overline{b} & 0 & -c\overline{d} & c & -1 &
-1 & 1 & c & c\overline{d} & 1\\
1 & \overline{a} & -\overline{a} & -\overline{b} & \overline{b} &
-d\overline{c} & 0 & -d\overline{e} & d\overline{c} & -1 & -d\overline{f} &
d\overline{e} & 1 & d\overline{f}\\
1 & e & e & -e & -e & \overline{c} & -e\overline{d} & 0 & -\overline{c} & 1 &
-\overline{f} & -1 & e\overline{d} & \overline{f}\\
1 & c & c & \overline{b} & -\overline{b} & -1 & c\overline{d} & -c & 0 & -1 &
1 & -c & -c\overline{d} & 1\\
1 & 1 & 1 & 1 & 1 & -1 & -1 & 1 & -1 & 0 & -1 & 1 & -1 & -1\\
1 & -\overline{a} & \overline{a} & f & f & 1 & -f\overline{d} & -f & 1 & -1 &
0 & -f & f\overline{d} & -1\\
1 & -e & -e & e & e & \overline{c} & e\overline{d} & -1 & -\overline{c} & 1 &
-\overline{f} & 0 & -e\overline{d} & \overline{f}\\
1 & \overline{a} & -\overline{a} & -\overline{b} & \overline{b} &
d\overline{c} & 1 & d\overline{e} & -d\overline{c} & -1 & d\overline{f} &
-d\overline{e} & 0 & -d\overline{f}\\
1 & -\overline{a} & \overline{a} & -f & -f & 1 & f\overline{d} & f & 1 & -1 &
-1 & f & -f\overline{d} & 0
\end{array}
\right)  .
\]
The considered pairs of rows are : $(2,3)$, $(4,5)$, $(6,9)$, $(7,13)$,
$(8,12)$ and $(11,14)$.

Now relating these matrices to frames we see for instance that we have
$(14,7)$ $CETFs$ depending on six unimodular complex numbers.

 It would be interesting to see whether the hereby presented
constructions can be applied to real conference matrices which are not of the
Paley type. Our complex conference matrices can be used efficiently to obtain
previously unknown real or complex Hadamard matrices and $CETFs$. These
matrices can be also used as starting points to construct Sylvester inverse
orthogonal matrices by the method developed in \cite{D}.\bigskip\

\section{Zauner's construction}

We show in the following that Seidel matrices associated to Zauner's frames are real symmetric conference matrices or the product by $i$ of real skew symmetric conference matrices. First we recall the construction. For any odd prime power $q=p^{m}$ let $GF(q)$ be the Galois field of order $q$, $\varkappa$ be the legendre symbol which is a multiplicative character of $GF(q)^{*}$ and let $\psi$ be the additive character defined by $\psi(a)=e^{2i\pi Tr(a)/p}$ where the $Tr$ is the linear mapping from $GF(q)$ to $F_{p}$ such that $Tr(a)=a^{p}+...+a^{p^{m}}$.  Now let $a_{1},...,a_{q}$ be the elements of $GF(q)$, $b_{1},...,b_{(q-1)/2}$ be the non-zero squares and $b'_{1},...,b'_{(q-1)/2}$ be the non-zero non squares.
The following vectors are given in \cite{Z}.
\begin{center}
$x_{1}=(1,0,...,0)$,
\end{center}
\begin{center}
$x_{2}=(1/\sqrt{q},\sqrt{2/q}\psi(b_{1}a_{1}),...,\sqrt{2/q}\psi(b_{(q-1)/2}a_{1})$,
...,
\end{center}
\begin{center}$
x_{q+1}=(1/\sqrt{q},\sqrt{2/q}\psi(b_{1}a_{q}),...,\sqrt{2/q}\psi(b_{(q-1)/2}a_{q}))$.
\end{center}
Based on a formula on additive characters the author showed in \cite{Z} that his vectors are unit and that the absolute value of any hermitean product $<x_{k},x_{l}>$ with $k\neq{l}$, is equal to $\frac{1}{\sqrt{q}}$. In the following the hermitean products $<x_{k},x_{l}>$ with $k\neq{l}$ are computed.
For any $2\leq k < l\leq q+1$ the hermitean product $<x_{k},x_{l}>$ is equal to
\begin{center}
$\frac{1}{q}+\frac{2}{q}\sum\limits_{s=1}^{\frac{q-1}{2}}\psi(b_{s}(a_{k}-a_{l}))$.
\end{center}
On the one hand
\begin{center}
$\sum\limits_{s=1}^{\frac{q-1}{2}}\psi(b_{s}(a_{k}-a_{l}))+\sum\limits_{s=1}^{\frac{q-1}{2}}\psi(b'_{s}(a_{k}-a_{l}))=\sum\limits_{\alpha \in GF(q)}^{}\psi(\alpha)-1=-1$,
\end{center}
because $\sum\limits_{\alpha \in GF(q)}^{}\psi(\alpha)=0$.
On the other hand
\begin{center}
$\sum\limits_{s=1}^{\frac{q-1}{2}}\psi(b_{s}(a_{k}-a_{l}))-\sum\limits_{s=1}^{\frac{q-1}{2}}\psi(b'_{s}(a_{k}-a_{l}))=\frac{1}{\varkappa(a_{k}-a_{l})}\sum\limits_{\alpha \in GF(q)}^{}\varkappa(\alpha)\psi(\alpha)$.
\end{center}
However the last sum is a general Gauss sum which was computed in \cite{BEV}. It turns out that this sum is equal to
\begin{center}
$(-1)^{m-1}\sqrt{q}$ if $p\equiv1(\operatorname{mod}4)$ and
\end{center}
\begin{center}
$-(-i)^{m}\sqrt{q}$ if $p\equiv-1(\operatorname{mod}4)$.
\end{center}
From this follows clearly that
\begin{center}
$<x_{k},x_{l}>=(-1)^{m-1}\frac{1}{\sqrt{q}}\varkappa(a_{k}-a_{l})$ if $p\equiv1(\operatorname{mod}4)$ and
\end{center}
\begin{center}
$<x_{k},x_{l}>=-(-i)^{m}\frac{1}{\sqrt{q}}\varkappa(a_{k}-a_{l})$ if $p\equiv-1(\operatorname{mod}4)$.
\end{center}
We see from the Gram matrices that with this construction we find again Paley matrices and no other complex conference matrices.

\end{document}